\documentclass[12pt]{amsart}
\usepackage[cp1251]{inputenc}
\usepackage[T2A]{fontenc}
\usepackage[english]{babel}
\usepackage{amsmath,amsfonts,amssymb}
\usepackage{geometry}
\usepackage{amsmath, amsthm, amscd, amsfonts, amssymb, graphicx, color}
\usepackage[bookmarksnumbered, colorlinks, plainpages]{hyperref}

\textheight605pt \textwidth425pt \oddsidemargin0pt \evensidemargin0pt
\topmargin-15pt \headheight15pt \headsep15pt \tolerance=4000

\newtheorem{theorem}{Theorem}

\newtheorem{lemma}{Lemma}
\newtheorem{theoremapp}{Theorem}

\theoremstyle{remark}
\newtheorem{remark}{Remark}

\theoremstyle{definition}

\begin{document}

\title[Localization principles for Schr\"odinger operator]
{Localization principles for Schr\"odinger operator with a singular matrix potential}


\author[V. Mikhailets]{Vladimir Mikhailets}
\address{Institute of Mathematics, National Academy of Sciences of Ukraine, Tere\-shchen\-kiv\-ska Str. 3, 01004 Kyiv-4, Ukraine; \newline
National Technical University of Ukraine\ "Kyiv Polytechnic Institute", Peremohy Avenue 37, 03056, Kyiv-56, Ukraine Ukraine}
\email{mikhailets@imath.kiev.ua}


\author[A. Murach]{Aleksandr Murach}
\address{Institute of Mathematics, National Academy of Sciences of Ukraine, Tereshchenkivska Str. 3, 01004 Kyiv-4, Ukraine}
\email{murach@imath.kiev.ua}


\author[V. Novikov]{Viktor Novikov}
\address{Institute of Mathematics, National Academy of Sciences of Ukraine, Tereshchenkivska Str. 3, 01004 Kyiv-4, Ukraine}
\email{thesuperpothead@gmail.com}

\subjclass[2010]{Primary 34L40; Secondary 81Q10, 47E05}

\keywords{Schr\"odinger operator, singular potential, semiboundedness, discrete spectrum, Molchanov's criterion}

\begin{abstract}
We study the spectrum of the one-dimensional Schr\"{o}dinger operator $H_0$ with a matrix singular distributional potential $q=Q'$ where $Q\in L^{2}_{\mathrm{loc}}(\mathbb{R},\mathbb{C}^{m})$. We obtain generalizations of Ismagilov's localization principles, which give necessary and sufficient conditions for the spectrum of $H_0$ to be bounded below and discrete.
\end{abstract}

\maketitle

\section{Introduction}\label{sect1}

Schr\"odinger operators occupy a special position in the modern mathematical physics because they have numerous applications to physical problems and other branches of mathematics; see, e.g., \cite{CFKS}. Nowadays the spectral theory of these operators has developed very profoundly and contains a number of fundamental results. Specifically, this concerns the questions about self-adjointness, semiboundedness, and discreteness of the spectrum. These questions are studied in the greatest detail for one-dimensional Schr\"odinger operators \cite{Glasman, Ismagilov61, Martynov65, Molchanov53, Zelenko67}, with local integrability being a standard condition on the regularity of the potential. Moreover, in last years of growing interest are problems in which the potential is singular and contains delta-functions supported on a discrete set or contains more general Radon measures \cite{AlbeverioKostenkoMalamud10, YanShi14}. Direct generalization of classical theorems to such operators is associated with serious difficulties. These difficulties become greater if the potentials are matrix-valued and the operator acts on vector-valued functions \cite{IsmagilovKostyuchenko07}.

The main purpose of our paper is to ground the fundamental localization principles for the most general operators of the mentioned type. In next papers this will allow us to obtain necessary and/or sufficient constructive conditions for these operators to be semibounded and for their spectrum to be discrete provided that we impose additional restrictions on the matrix potential. The proofs of the results given below is based on the regularization of the differential expression with the help of quasiderivatives \cite{GoriunovMikhailets12UMJ9, GoriunovMikhailetsPankrashkin13, GoriunovMikhailets10MathNotes, Konstantinov15, MikhailetsMolyboga13MFAT2, Molyboga15, SavchukShkalikov99}.

The paper consists of five sections and Appendix. Section~\ref{sect1} is Introduction. Section~\ref{sect2} contains the statement of the problem and formulation of our main results, Theorems \ref{th_1st_loc_princ} and~\ref{th_2nd_loc_princ}. They are  generalizations of the localization principles by Ismagilov \cite{Ismagilov61} to the case of a matrix distributional potential. These theorems are proved in Section~\ref{sect5}. Their proofs are based on the basic Lemma~\ref{lem2} established in Section~\ref{sect4}. Beforehand we will obtain some auxiliary results in Section~\ref{sect3}.

\section{Statement of the problem and main results}\label{sect2}

We consider a linear differential expression
\begin{equation} \label{A}
h(y):=-y''+qy
\end{equation}
in the complex separable Hilbert space $L^2(\mathbb{R},\mathbb{C}^m)$, with $m\geq1$. Here, $y:=(y_1,...,y_m)\in L^2(\mathbb{R}, \mathbb{C}^m)$, and $q:=(q_{i,j})_{i,j=1}^m$ is a matrix potential such that each
\begin{equation*}
q_{i,j}=Q_{i,j}'\quad\mbox{for a certain}\quad Q_{i,j}\in L^{2}_{\mathrm{loc}}(\mathbb{R},\mathbb{C}).
\end{equation*}
Throughout the paper, derivatives are understood in the sense of the theory of distributions. Put $Q:=( Q_{i,j})_{i,j=1}^m$. In the sequel,   the matrix potential $Q$ is supposed to be Hermitian-symmetric, i.e. $Q=Q^*$.

Using the quasiderivatives
\begin{equation*}
y^{[1]}:=y'-Qy\quad\mbox{and}\quad y^{[2]}:=\bigl(y^{[1]}\bigr)'+Qy^{[1]}+Q^2y
\end{equation*}
(see, e.g., \cite{MikhailetsMolyboga13MFAT2}), we write the differential expression \eqref{A} in the form  $h(y)=-y^{[2]}$. Following \cite[Section~1]{MikhailetsMolyboga13MFAT2}, we associate the maximal, preminimal, and minimal operators with this expression in the following way: the maximal operator
\begin{equation} \label{AA}
Hy:=-y^{[2]}
\end{equation}
is defined on the natural widest domain
\begin{equation*}
D(H):=\bigl\{y\in L^2(\mathbb{R},\mathbb{C}^m) :y,y^{[1]}\in \mathrm{AC}_{\mathrm{loc}}(\mathbb{R},\mathbb{C}^m),\; y^{[2]} \in L^2(\mathbb{R}, \mathbb{C}^m)\bigr\}.
\end{equation*}
Here, as usual, $\mathrm{AC}_{\mathrm{loc}}(\mathbb{R},\mathbb{C}^m)$ denotes the set of all vector-valued functions $y:\mathbb{R}\rightarrow\mathbb{C}^m$ that are absolutely continuous on every compact interval $[a,b]\subset\mathbb{R}$. By definition, the preminimal operator $H_0'$ is the restriction of the maximal operator \eqref{AA} to the set of all compactly supported functions $y\in D(H)$, and the minimal operator $H_0$ is the closure of~$H_0'$. It is known \cite[Corollary~2 and Proposition~7]{MikhailetsMolyboga13MFAT2} that the domains of $H$, $H_0'$, and $H_0$ are dense in the Hilbert space $L^2(\mathbb{R},\mathbb{C}^m)$ and that the operators $H_0'$ and $H_0$ are symmetric and
\begin{equation*}
H = (H_0')^*=H_0^*.
\end{equation*}

The main results of the paper are generalizations of the localization principles by Ismagilov \cite{Ismagilov61} to the case of a matrix distributional potential.

Let us introduce some designations. Given a nonempty open set $\Omega\subseteq\mathbb{R}$, we put
\begin{equation*}
\lambda(\Omega):=\inf\biggl\{\frac{\langle H_0y,y\rangle}{\langle y,y\rangle}:\; y\in D(H_0')\setminus\lbrace 0\rbrace,\; \mathrm{supp}\,y\subset \Omega\biggr\}.
\end{equation*}
Here and below, $\langle \cdot,\cdot\rangle$ is the inner product in the Hilbert space $L^2(\mathbb{R},\mathbb{C}^m)$. Since the operator $H_0'$ is symmetric, the inclusion $\langle H_0'y,y\rangle\in\mathbb{R}$ holds;   therefore $\lambda(\Omega)$ is well defined.

We choose a number $\ell>0$ arbitrarily and put
$$
\omega_n^\ell:=\Bigl(\frac{n\ell}{2},\frac{n\ell}{2}+\ell\Bigr) \quad \mbox{for every}\quad n\in\mathbb{Z}.
$$

As in the case of a locally integrable scalar potential, each number
$\lambda(\omega_n^\ell)$ coincides with the smallest eigenvalue of the bounded below selfadjoint operator $H^{\mathrm{D}}(\omega_n^\ell)$ generated by the differential expression \eqref{A} and the homogeneous boundary conditions $y(n\ell/{2})=y(n\ell/{2}+\ell)=0$ in the Hilbert space $L^2(\omega_n^\ell,\mathbb{C}^m)$ (see Appendix below). Therefore these numbers make physical sense.

\begin{theorem}[the first localization principle] \label{th_1st_loc_princ}
The minimal operator $H_0$ is bounded below and selfadjoint if and only if the sequence of numbers  $(\lambda(\omega_n^\ell))_{n=-\infty}^{+\infty}$ is bounded below.
\end{theorem}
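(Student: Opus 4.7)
The proof splits into necessity and sufficiency. \emph{Necessity} is immediate: if $H_0\geq cI$, then $\langle H_0 y,y\rangle \geq c\|y\|^2$ for every $y \in D(H_0')$, and restricting to $y$ with $\mathrm{supp}\,y \subset \omega_n^\ell$ yields $\lambda(\omega_n^\ell)\geq c$ for all $n\in\mathbb{Z}$.

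For \emph{sufficiency}, suppose $c := \inf_n\lambda(\omega_n^\ell) > -\infty$. The plan is an IMS-type localization. Choose a smooth partition of unity $\{\varphi_n\}_{n\in\mathbb{Z}}$ with $\varphi_n$ compactly supported in $\omega_n^\ell$, $\sum_n\varphi_n^2\equiv 1$, only finitely many $\varphi_n$ nonzero at each point, and $M := \sup_x\sum_n|\varphi_n'(x)|^2<\infty$; such a partition exists because consecutive intervals $\omega_n^\ell$ overlap with uniform positive length $\ell/2$. For any $y\in D(H_0')$ the Leibniz rule for quasiderivatives gives
\begin{equation*}
(\varphi_n y)^{[1]} = \varphi_n y^{[1]} + \varphi_n' y, \qquad (\varphi_n y)^{[2]} = \varphi_n y^{[2]} + 2\varphi_n' y' + \varphi_n'' y,
\end{equation*}
so $\varphi_n y \in D(H_0')$ with compact support in $\omega_n^\ell$. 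Integrating by parts and using the cancellation $\sum_n (\varphi_n^2)' \equiv 0$ in the cross terms then yields the IMS-type localization identity
\begin{equation*}
\langle H_0 y, y\rangle = \sum_{n\in\mathbb{Z}} \langle H_0(\varphi_n y), \varphi_n y\rangle - \sum_{n\in\mathbb{Z}} \|\varphi_n' y\|^2.
\end{equation*}

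Plugging in the lower bound $\langle H_0(\varphi_n y), \varphi_n y\rangle \geq \lambda(\omega_n^\ell)\|\varphi_n y\|^2$ from the definition of $\lambda$, and using $\sum_n\|\varphi_n y\|^2 = \|y\|^2$ together with $\sum_n\|\varphi_n' y\|^2 \leq M\|y\|^2$, gives
\begin{equation*}
\langle H_0 y, y\rangle \geq (c - M)\|y\|^2, \qquad y \in D(H_0'),
\end{equation*}
so $H_0'$, and hence its closure $H_0$, is semibounded below. It remains to derive self-adjointness of $H_0$ from its semiboundedness in the singular matrix setting: this is where I would invoke the basic Lemma~\ref{lem2} from Section~\ref{sect4}, which presumably extends the classical assertion that a semibounded one-dimensional Sturm--Liouville minimal operator is in the limit-point case at both $\pm\infty$ (so that $H_0 = H_0^{**} = H^* = H$, using $H = H_0^*$) to distributional matrix potentials $q = Q'$ with $Q \in L^2_{\mathrm{loc}}$. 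The main obstacle in executing the plan is precisely this last implication: the IMS manipulation is a clean Leibniz-and-integration-by-parts computation once the quasiderivative formalism is available, but the essential self-adjointness statement carries all the substantive analytic weight and is what should be packaged into Lemma~\ref{lem2}.
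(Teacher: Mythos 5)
Your argument is correct and is in substance the same as the paper's: the paper's Lemma~\ref{lem1} is precisely an IMS-type localization identity, written with the pieces $u_k=\theta_k^2y$ and $v_k=\theta_k\theta_{k+1}y$ arising from $\theta_k^2+\theta_{k+1}^2=1$ rather than with a single quadratic partition $\sum_n\varphi_n^2=1$, and feeding the bound $\langle H_0w,w\rangle\ge\lambda(\omega_k^\ell)\,\|w\|^2$ into that identity yields $H_0'\ge(\alpha-8\varkappa^2)I$, exactly as in your computation (your Leibniz formulas for $(\varphi_ny)^{[1]}$ and $(\varphi_ny)^{[2]}$ are right, and the $Q$-dependent cross terms do cancel because $\sum_n\varphi_n\varphi_n'=0$). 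Two remarks on the differences. First, the paper does not apply the identity directly to Theorem~\ref{th_1st_loc_princ}; it first distills it into Lemma~\ref{lem2}, which asserts that for every nonempty open $\Omega$ there exists $n$ with $\omega_n^\ell\cap\Omega\ne\varnothing$ and $\lambda(\omega_n^\ell)\le\lambda(\Omega)+8\varkappa^2$; taking $\Omega=\mathbb{R}$ gives the semiboundedness, and the same lemma is then reused for the discreteness theorem, so the detour buys a statement strong enough for Theorem~\ref{th_2nd_loc_princ} as well. Second, you have misguessed the content of Lemma~\ref{lem2}: it is the localization inequality just described, not a limit-point or essential self-adjointness criterion. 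The implication ``$H_0$ bounded below $\Rightarrow$ $H_0$ selfadjoint'' is not proved in this paper at all --- it is quoted from \cite[Corollary~2]{MikhailetsMolyboga13MFAT2}. So the step you rightly single out as carrying the analytic weight is, just as in your sketch, an external input; you only need to point at the correct source rather than at Lemma~\ref{lem2}.
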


\begin{theorem}[the second localization principle] \label{th_2nd_loc_princ}
The operator $H_0$ is a bounded below selfadjoint operator with discrete spectrum if and only if
\begin{equation}\label{T2}
\lambda(\omega_n^\ell)\rightarrow+\infty \quad \mbox{as} \quad |n|\rightarrow\infty.
\end{equation}
\end{theorem}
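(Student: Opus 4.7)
The plan is to prove both directions by combining a variational characterization of discreteness (via the min--max principle) with the basic Lemma~\ref{lem2} of Section~\ref{sect4}, which I expect to be an IMS--type localization estimate bounding the global quadratic form $\langle H_0 y,y\rangle$ from below by the sum of the local analogues on the intervals $\omega_n^\ell$.

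\emph{Necessity.} Assume $H_0$ is bounded below, selfadjoint, and has discrete spectrum, and suppose for contradiction that $\lambda(\omega_{n_k}^\ell)\le C$ along a sequence with $|n_k|\to\infty$. Thinning so that $|n_{k+1}-n_k|\ge 2$, the intervals $\omega_{n_k}^\ell$ become pairwise disjoint in the sense that any compactly supported function inside $\omega_{n_k}^\ell$ has support disjoint from one inside $\omega_{n_j}^\ell$ for $j\neq k$, since such supports stay away from the endpoints of the open intervals. By the definition of $\lambda$, for each $k$ pick a unit vector $y_k\in D(H_0')$ with $\mathrm{supp}\,y_k\subset\omega_{n_k}^\ell$ and $\langle H_0 y_k,y_k\rangle\le C+1$. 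The $y_k$ are then orthonormal, so the min--max principle forces $H_0$ to have infinitely many eigenvalues in $(-\infty,C+1]$, contradicting discreteness.

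\emph{Sufficiency.} Assume $\lambda(\omega_n^\ell)\to+\infty$. This sequence is in particular bounded below, so Theorem~\ref{th_1st_loc_princ} gives that $H_0$ is bounded below and selfadjoint. To obtain discreteness, fix a smooth partition of unity $\{\varphi_n\}$ subordinate to $\{\omega_n^\ell\}$ with $\sum_n\varphi_n^2\equiv 1$ and uniformly bounded derivatives. Lemma~\ref{lem2} is then expected to deliver
\begin{equation*}
\langle H_0 y,y\rangle \;\ge\; \sum_{n\in\mathbb{Z}}\lambda(\omega_n^\ell)\,\|\varphi_n y\|^2 \;-\; K\|y\|^2
\end{equation*}
for every $y$ in the form domain of $H_0$, with $K$ depending only on the partition. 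Given $N>0$, choose $n_0$ so that $\lambda(\omega_n^\ell)\ge N$ for $|n|>n_0$, split the sum, and use $\sum_n\|\varphi_n y\|^2=\|y\|^2$ to arrive at
\begin{equation*}
\langle H_0 y,y\rangle \;\ge\; (N-K)\,\|y\|^2 \;-\; (N-\lambda_{\min})\sum_{|n|\le n_0}\|\varphi_n y\|^2,
\end{equation*}
where $\lambda_{\min}:=\min_{|n|\le n_0}\lambda(\omega_n^\ell)$. The last sum is a quadratic form in $y$ given by multiplication by a bounded, compactly supported function, hence is form-compact relative to $H_0$ (the form domain embeds compactly into $L^2$ on any bounded interval). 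By Glazman's lemma the essential spectrum of $H_0$ lies in $[N-K,+\infty)$; since $N$ is arbitrary, $\sigma_{\mathrm{ess}}(H_0)=\emptyset$ and the spectrum is discrete.

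\emph{Main obstacle.} The nontrivial work is concentrated in Lemma~\ref{lem2}: with $q=Q'$ merely a distribution, the standard IMS integration by parts for $-y''+qy$ is not directly available and has to be rerouted through the quasiderivatives $y^{[1]},y^{[2]}$. One must verify that multiplication by a smooth cutoff $\varphi_n$ preserves the quasiderivative formalism with only a controlled remainder of the form $\|\varphi_n' y\|^2$, which is presumably what the auxiliary results of Section~\ref{sect3} are designed to deliver. Once the localization identity is in hand, the proof of Theorem~\ref{th_2nd_loc_princ} reduces to the classical Molchanov--Ismagilov scheme sketched above, adapted to vector-valued functions.
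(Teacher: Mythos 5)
Your overall architecture is right, and your \emph{necessity} argument is essentially the paper's: the authors also pick disjointly supported test functions $y_k$ on a thinned subsequence of the intervals and invoke Glazman's variational lemma (\cite[Chapter~1, Theorem~13]{Glasman}) to produce infinitely many spectral points of $H_0$ below $r$, contradicting discreteness. Your min--max phrasing is an equivalent way to say the same thing, and the vanishing of the cross terms $\langle H_0y_k,y_j\rangle$ by locality and disjointness of supports is exactly what makes it work.

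The \emph{sufficiency} half is where you diverge, and where there is a genuine gap. First, a minor point: the inequality $\langle H_0y,y\rangle\ge\sum_n\lambda(\omega_n^\ell)\|\varphi_ny\|^2-K\|y\|^2$ is delivered not by Lemma~\ref{lem2} but by Lemma~\ref{lem1} (the IMS-type identity \eqref{l1_main} with weights $\theta_k^2$ and $\theta_k\theta_{k+1}$, combined with $\langle H_0u_k,u_k\rangle\ge\lambda(\omega_k^\ell)\|u_k\|^2$ and the partition identity \eqref{l2-h}); Lemma~\ref{lem2} is the converse localization statement. The substantive problem is your final step: you assert that the form $y\mapsto\sum_{|n|\le n_0}\|\varphi_ny\|^2$, i.e.\ multiplication by a bounded compactly supported function, is form-compact relative to $H_0$ because ``the form domain embeds compactly into $L^2$ on any bounded interval.'' For a distributional matrix potential $q=Q'$ with $Q\in L^2_{\mathrm{loc}}$ this is precisely the nontrivial point: it is essentially equivalent to the discreteness of the spectrum of the Dirichlet realization on a finite interval, and it requires showing that the form norm controls $\|y'\|_{L^2}$ locally despite the singular cross terms $\int(Qy,y')$. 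You cannot take it for granted. The paper routes around this entirely: it applies Lemma~\ref{lem2} to the exterior set $\Omega=(-\infty,-n_r\ell/2-\ell)\cup(n_r\ell/2+\ell,\infty)$ to get $\langle H_0y,y\rangle\ge(r-8\varkappa^2)\|y\|^2$ for $y$ supported in $\Omega$, then decouples $H_0$ at two points $\pm\gamma$ into $H_1^{\mathrm F}\oplus H_2^{\mathrm F}\oplus H_3^{\mathrm F}$, cites the known discreteness of the spectrum of the finite-interval operator $H_2^{\mathrm F}$ \cite{SavchukShkalikov99}, and uses that the resolvent difference is of finite rank to conclude $\sigma_{\mathrm{ess}}(H_0)\subseteq[r-8\varkappa^2,\infty)$. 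Your scheme can be completed, but only by proving the local compactness statement, which amounts to importing the same finite-interval input the paper uses; as written, that step is a hole rather than a routine remark.
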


\begin{remark}
It follows from Theorems \ref{th_1st_loc_princ} and \ref{th_2nd_loc_princ} that if for a certain $\ell>0$ the sequence $(\lambda(\omega_n^\ell))_{n=-\infty}^{+\infty}$ is bounded below or satisfies \eqref{T2}, this sequence will have the same property for every $\ell>0$.
\end{remark}

\begin{remark}
In Theorems \ref{th_1st_loc_princ} and \ref{th_2nd_loc_princ} it is possible to replace all the intervals $\omega_n^\ell$, where
$n\in\mathbb{Z}$, with their shifts at an arbitrarily chosen number $a\in\mathbb{R}$.
\end{remark}

\begin{remark}
Analogs of Theorems \ref{th_1st_loc_princ} and \ref{th_2nd_loc_princ} are true in the case where the differential expression \eqref{A} is given on a semiaxis.
\end{remark}

\begin{remark}
Theorem \ref{th_1st_loc_princ} somewhat generalizes and together with Lemma~\ref{lem2} for $\Omega:=\mathbb{R}$ supplements the known statement \cite{MikhailetsMolyboga13MFAT2} about the self-adjointness of the bounded below operator $H_0$. Specifically, it follows from Theorem~\ref{th_1st_loc_princ} that the operator $H_0$ with the periodic matrix potential $Q$ is bounded below and selfadjoint; cf. \cite{MikhailetsMolyboga08MFAT2}, where the case of $m=1$ is examined.
\end{remark}

\section{Auxiliary results}\label{sect3}

Given vector-valued functions $y,z:\mathbb{R}\to\mathbb{C}^m$, we let $(y,z)$ denote the scalar complex-valued function defined by the formula $(y,z):=y_1\overline{z_1}+\ldots+y_m\overline{z_m}$ on~$\mathbb{R}$. Note that
$$
\langle y,z\rangle=\int\limits_{-\infty}^{\infty}(y,z)dx
$$
if $y,z\in L^2(\mathbb{R},\mathbb{C}^{m})$. Throughout the paper all integrals are understood in the sense of Lebesgue, and $dx$ denotes the Lebesgue measure on $\mathbb{R}$, we omitting the argument $x$ of functions under the integral sign.

We choose a real-valued function $\theta\in C^{\infty}(\mathbb{R})$ such that $\mathrm{supp}\,\theta =[0,\ell]$ and
\begin{gather}\label{f1}
\theta^2(x)+\theta^2(x-\ell/2)=1 \quad\mbox{for every} \quad x\in [\ell/2,\ell].
\end{gather}
An example of this function will be given at the end of the present section. Given $k\in\mathbb{Z}$ and $y\in D(H_0')$, we introduce the functions
\begin{equation*}
\theta_k(x):=\theta(x-k\ell/2),\quad
u_k(x):=\theta^2_k(x)y(x), \quad\mbox{and}\quad
v_k(x):=\theta_k(x)\theta_{k+1}(x)y(x)
\end{equation*}
of $x\in\mathbb{R}$

\begin{lemma}\label{lem1}
Let $y\in D(H_0')$. Then $u_k, v_k\in D(H_0')$ for every $k\in\mathbb{Z}$, and we have the equality
\begin{equation}\label{l1_main}
\begin{aligned}
\langle H_0y,y\rangle=\sum\limits_{k=-\infty}^\infty \langle H_0u_k,u_k\rangle&+2\sum\limits_{k=-\infty}^\infty \langle H_0 v_k,v_k\rangle\\
&-2\sum\limits_{k=-\infty}^\infty \:\int\limits_{k\ell/2+\ell/2}^{k\ell/2+\ell} (\theta'_k\theta_{k+1}-\theta_k\theta'_{k+1})^2\,(y,y)\,dx.
\end{aligned}
\end{equation}
\end{lemma}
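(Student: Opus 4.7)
My plan is to prove the identity by applying a simpler quadratic-form identity to $y$, $u_k$, and $v_k$ individually and then summing via the partition of unity $\sum_k\theta_k^2\equiv 1$. The base identity I would first establish is
\[
\langle H_0 z,z\rangle \;=\; \int_{\mathbb{R}}|z^{[1]}|^2\,dx \;-\; \int_{\mathbb{R}}(Q^2 z,z)\,dx, \qquad z\in D(H_0'),
\]
via integration by parts (boundary terms vanish since $z$ has compact support), the rearrangement $z'=z^{[1]}+Qz$, and the cancellation $(z^{[1]},Qz)=(Qz^{[1]},z)$ coming from $Q=Q^*$. The memberships $u_k,v_k\in D(H_0')$ would follow from the Leibniz rules $(\varphi y)^{[1]}=\varphi'y+\varphi y^{[1]}$ and $(\varphi y)^{[2]}=\varphi''y+2\varphi'y'+\varphi y^{[2]}$ applied to the smooth compactly supported scalar multipliers $\varphi=\theta_k^2$ and $\varphi=\theta_k\theta_{k+1}$, so that all local regularity is inherited from $y$. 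Since $y$ is compactly supported, every sum over $k$ in \eqref{l1_main} reduces to finitely many nonzero terms.

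Next I would compute
\[
u_k^{[1]}=2\theta_k\theta_k'\,y+\theta_k^2\,y^{[1]}, \qquad v_k^{[1]}=(\theta_k\theta_{k+1})'\,y+\theta_k\theta_{k+1}\,y^{[1]},
\]
form $\sum_k|u_k^{[1]}|^2+2\sum_k|v_k^{[1]}|^2$ pointwise, and match each piece against the base identity for $y$. The coefficient of $|y^{[1]}|^2$ becomes $\sum_k\theta_k^4+2\sum_k\theta_k^2\theta_{k+1}^2=\bigl(\sum_k\theta_k^2\bigr)^2=1$ (using $\theta_k\theta_j\equiv 0$ for $|j-k|\ge 2$), and the same partition collapses $\sum_k(Q^2u_k,u_k)+2\sum_k(Q^2v_k,v_k)$ to $(Q^2y,y)$. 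The mixed $2\,\mathrm{Re}(y,y^{[1]})$ piece acquires the coefficient $4\sum_k\theta_k^3\theta_k'+4\sum_k\theta_k\theta_{k+1}(\theta_k\theta_{k+1})'$, which by grouping and using $\theta_{k-1}^2+\theta_k^2+\theta_{k+1}^2\equiv 1$ on $\mathrm{supp}\,\theta_k$ simplifies to $4\sum_k\theta_k\theta_k'=2\bigl(\sum_k\theta_k^2\bigr)'\equiv 0$.

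The remaining $|y|^2$-coefficient is $4\sum_k\theta_k^2(\theta_k')^2+2\sum_k((\theta_k\theta_{k+1})')^2$, and reconciling this with the sum appearing in \eqref{l1_main} is the key step. The expansion $((\theta_k\theta_{k+1})')^2=(\theta_k'\theta_{k+1}-\theta_k\theta_{k+1}')^2+4\theta_k\theta_k'\theta_{k+1}\theta_{k+1}'$ reduces the problem to proving the pointwise identity
\[
4\sum_k\theta_k^2(\theta_k')^2+8\sum_k\theta_k\theta_k'\theta_{k+1}\theta_{k+1}'\;\equiv\; 0 \quad\text{on}\quad \mathbb{R}.
\]
I would verify this on each half-interval $(k\ell/2,k\ell/2+\ell/2)$: there, only $\theta_{k-1}$ and $\theta_k$ are nonzero, and differentiating \eqref{f1} (shifted) gives $\theta_{k-1}\theta_{k-1}'+\theta_k\theta_k'=0$, which implies $\theta_{k-1}^2(\theta_{k-1}')^2=\theta_k^2(\theta_k')^2$ and $\theta_{k-1}\theta_{k-1}'\theta_k\theta_k'=-\theta_k^2(\theta_k')^2$. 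Then the first sum contributes $8\theta_k^2(\theta_k')^2$ and the second $-8\theta_k^2(\theta_k')^2$, cancelling exactly. Since the surviving integrand $(\theta_k'\theta_{k+1}-\theta_k\theta_{k+1}')^2$ is supported precisely in the overlap $[k\ell/2+\ell/2,\,k\ell/2+\ell]$, integration against $(y,y)$ gives the boundary-type sum displayed in \eqref{l1_main}. This algebraic cancellation is the main technical obstacle, and it is what forces the specific minus sign in $(\theta_k'\theta_{k+1}-\theta_k\theta_{k+1}')^2$.
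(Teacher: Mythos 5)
Your proposal is correct, and it reaches \eqref{l1_main} by a route that differs from the paper's in a meaningful way. The paper starts from the representation $\langle H_0y,y\rangle=\int\bigl((y^{[1]},y')-(Qy',y)\bigr)dx$, applies it to $u_k$ and $v_k$, re-indexes the resulting integrals onto the overlap intervals $[k\ell/2+\ell/2,k\ell/2+\ell]$, and then has to verify \emph{three} separate pointwise identities there: one for the sums of $(u_k',u_k')$-type terms (which produces the defect $2(\theta_k'\theta_{k+1}-\theta_k\theta_{k+1}')^2(y,y)$) and two more showing that the cross terms $(Qu_k,u_k')+2(Qv_k,v_k')+(Qu_{k+1},u_{k+1}')$ and their conjugates collapse to $(Qy,y')$ and $(Qy',y)$. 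You instead diagonalize the form first, writing $\langle H_0z,z\rangle=\int|z^{[1]}|^2dx-\int(Q^2z,z)dx$ via $z'=z^{[1]}+Qz$ and the cancellation $(z^{[1]},Qz)=(Qz^{[1]},z)$ from $Q=Q^*$; this puts the entire $Q$-dependence into the zeroth-order multiplier $Q^2$, which localizes trivially under the partition $\sum_k\theta_k^4+2\sum_k\theta_k^2\theta_{k+1}^2=1$, and reduces the whole computation to the classical IMS-type bookkeeping with $y'$ replaced by $y^{[1]}$. What you buy is the elimination of the paper's two $Q$-cross-term identities and of the interval re-indexing (you work with globally defined pointwise sums, locally finite since $\theta_k\theta_j\equiv0$ for $|j-k|\ge2$); what remains as the genuine content is exactly the same combinatorial cancellation, namely $4\sum_k\theta_k^2(\theta_k')^2+8\sum_k\theta_k\theta_k'\theta_{k+1}\theta_{k+1}'\equiv0$, which you correctly derive from $\theta_{k-1}\theta_{k-1}'+\theta_k\theta_k'=0$ on each half-interval, leaving precisely the residual term $2\sum_k(\theta_k'\theta_{k+1}-\theta_k\theta_{k+1}')^2(y,y)$ supported on the overlaps. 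Your treatment of the domain memberships via the quasiderivative Leibniz rules is also equivalent to the paper's explicit computation of $u_k^{[1]}$ and $u_k^{[2]}$ (note that $\varphi'y'=\varphi'(y^{[1]}+Qy)\in L^2$ because $Q\in L^2_{\mathrm{loc}}$ and $y$ is continuous with compact support, which is the one point worth making explicit). All coefficients and signs check out against \eqref{l1_main}.
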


\begin{proof}
We choose $k\in\mathbb{Z}$ arbitrarily and will show that $u_k,v_k\in D(H_0')$. Since the functions $y\in\mathrm{AC_{loc}}(\mathbb{R},\mathbb{C}^{m})$ and $\theta_k,\theta_{k+1}\in C^{\infty}(\mathbb{R},\mathbb{R})$ are compactly supported, the functions $u_k$ and $v_k$ are also compactly supported and belong to both the spaces $\mathrm{AC_{loc}}(\mathbb{R}, \mathbb{C}^{m})$ and $L^2(\mathbb{R}, \mathbb{C}^{m})$. Besides,
\begin{align*}
u_k^{[1]}=u_k'-Qu_k=(\theta_k^2 y)' - Q\theta_k^2y&=(\theta_k^2)'y+\theta_k^2 y'-Q\theta_k^2y\\
&=(\theta_k^2)'y+\theta_k^2y^{[1]}\in \mathrm{AC_{loc}}(\mathbb{R},\mathbb{C}^m)
\end{align*}
and
\begin{align*}
u_k^{[2]}&=\bigl(u_k^{[1]}\bigr)' + Qu_k^{[1]} + Q^2u_k =
\bigl((\theta_k^2)'y+\theta_k^2y^{[1]}\bigr)'+
Q\bigl((\theta_k^2)'y+\theta_k^2y^{[1]}\bigr)+Q^2\theta_k^2y\\
&=(\theta_k^2)''y + (\theta_k^2)'y'  + (\theta_k^2)'y^{[1]}+ \theta_k^2\bigl(y^{[1]}\bigr)' + (\theta_k^2)'Qy + \theta_k^2Qy^{[1]} + \theta_k^2Q^2y\\
&= \theta_k^2y^{[2]} + (\theta_k^2)'\bigl(y'+ y^{[1]}+ Qy\bigr) + (\theta_k^2)''y\\
&= \theta_k^2y^{[2]} + 2(\theta_k^2)'\bigl(y^{[1]} + Qy\bigr)+(\theta_k^2)''y\in L_2(\mathbb{R},\mathbb{C}^m).
\end{align*}
Here, we use the fact that $y^{[1]}\in \mathrm{AC_{loc}}(\mathbb{R}, \mathbb{C}^{m})$, $y^{[2]}\in L^2(\mathbb{R}, \mathbb{C}^{m})$, and $Q\in L^2_{\mathrm{loc}}(\mathbb{R}, \mathbb{C}^{m\times m})$. Replacing $u_k$ with $v_k$ and $\theta_k^2$ with $\theta_k\theta_{k+1}$ in the above equalities, we obtain the inclusions $v_k^{[1]}\in \mathrm{AC_{loc}}(\mathbb{R}, \mathbb{C}^{m})$ and $v_k^{[2]}\in L_2(\mathbb{R},\mathbb{C}^m)$. Thus, $u_k, v_k\in D(H_0')$ by the definition of $D(H_0')$.

Let us now prove equality \eqref{l1_main}. Integrating by parts, we write
\begin{equation}\label{l1_1}
\begin{aligned}
\langle H_0y,y\rangle=\langle y^{[2]},y\rangle
&=-\int\limits_{-\infty}^{\infty}\bigl( (y^{[1]})' +Q(y'-Qy)+Q^2y,y\bigr)dx\\
&=\int\limits_{-\infty}^{\infty}\bigl(
(y^{[1]},y')-(Qy',y)
\bigr)dx.
\end{aligned}
\end{equation}
Equality \eqref{l1_1} holds true for every function $y\in D(H_0')$. Since $u_k,v_k\in D(H_0')$, we may put $y:=u_k$ or $y:=v_k$ in this equality and write
\begin{gather}\label{l1_3}
\langle H_0u_k,u_k\rangle=\int\limits_{k\ell/2}^{k\ell/2+\ell}
\bigl((u_k^{[1]},u_k')-(Qu_k',u_k)\bigr)dx,\\
\langle H_0v_k,v_k\rangle=\int\limits_{k\ell/2+\ell/2}^{k\ell/2+\ell}
\bigl((v_k^{[1]},v_k')-(Qv_k',v_k)\bigr)dx. \label{l1_4}
\end{gather}
Here, we use the fact that $\mathrm{supp}\,u_k\subseteq[k\ell/2,k\ell/2+\ell]$ and $\mathrm{supp}\,v_k\subseteq [k\ell/2+\ell/2,k\ell/2+\ell]$.

Owing to \eqref{l1_3} we write
\begin{align*}
&\sum\limits_{k=-\infty}^{\infty}\langle H_0u_k,u_k\rangle\\
=&\sum\limits_{k=-\infty}^{\infty}\int\limits_{k\ell/2}^{k\ell/2+\ell/2}
\bigl((u_k^{[1]},u_k')-(Qu_k',u_k)
\bigr)dx\\
&\quad+\sum\limits_{k=-\infty}^{\infty}\,
\int\limits_{k\ell/2+\ell/2}^{k\ell/2+\ell}
\bigl((u_k^{[1]},u_k')-(Qu_k',u_k)\bigr)dx\\
=&\sum\limits_{j=-\infty}^{\infty}\,
\int\limits_{j\ell/2+\ell/2}^{j\ell/2+\ell}
\bigl((u_{j+1}^{[1]},u_{j+1}')-(Qu_{j+1}',u_{j+1})\bigr)dx\\
&\quad+\sum\limits_{k=-\infty}^{\infty}\,
\int\limits_{k\ell/2+\ell/2}^{k\ell/2+\ell}
\bigl((u_k^{[1]},u_k')-(Qu_k',u_k)\bigr)dx\\
=&\sum\limits_{k=-\infty}^{\infty}\,
\int\limits_{k\ell/2+\ell/2}^{k\ell/2+\ell}
\bigl((u_{k}^{[1]},u_{k}')+(u_{k+1}^{[1]},u_{k+1}')-(Qu_k',u_k)
-(Qu_{k+1}',u_{k+1})\bigr)dx.
\end{align*}
By virtue of this formula and \eqref{l1_4}, we obtain the equalities
\begin{align}
&\sum\limits_{k=-\infty}^\infty \langle H_0u_k,u_k\rangle+2\sum\limits_{k=-\infty}^\infty \langle H_0v_k,v_k\rangle\notag\\
=&\sum\limits_{k=-\infty}^{\infty}\,
\int\limits_{k\ell/2+\ell/2}^{k\ell/2+\ell}
\bigl((u_{k}^{[1]},u_{k}')+(u_{k+1}^{[1]},u_{k+1}')+2(v_{k}^{[1]},v_{k}')
\notag\\
&\qquad\qquad\quad
-(Qu_k',u_k)-(Qu_{k+1}',u_{k+1})-2(Qv_k',v_k)\bigr)dx\notag\\
=&\sum\limits_{k=-\infty}^\infty\,
\int\limits_{k\ell/2+\ell/2}^{k\ell/2+\ell}\bigl((u_k',u_k')+ 2(v_k',v_k') +(u_{k+1}',u_{k+1}')\notag\\
&\qquad\qquad\quad
-(Qu_k,u_k')-2(Qv_k,v_k')-(Qu_{k+1},u_{k+1}')\notag\\
&\qquad\qquad\quad
-(Qu_k',u_k)-2(Qv_k',v_k)-
(Qu_{k+1}',u_{k+1})\bigr)
dx.\label{l1_5}
\end{align}

Let us show that for every $k\in\mathbb{Z}$ the last integrand is equal to
\begin{equation}\label{l1_uk}
(y',y')+2(\theta_k'\theta_{k+1}-\theta_k\theta'_{k+1})^2(y,y)-
(Qy,y')-(Qy',y).
\end{equation}
We note beforehand that
\begin{align*}
(u_k',u_k')&=\bigl((\theta_k^2y)',(\theta_k^2y)'\bigr)=
\bigl((2\theta_k\theta_k'y+\theta_k^2y'),
(2\theta_k\theta_k'y+\theta_k^2y')\bigr)\\
&=4\theta_k^2(\theta_k')^2(y,y)+2\theta_k^3\theta_k'\bigl((y,y')+
(y',y)\bigr)+\theta_k^4(y',y')
\end{align*}
and
\begin{align*}
(v_k',v_k')&= \bigl((\theta_k\theta_{k+1}y)',(\theta_k\theta_{k+1}y)'\bigr)\\
&=\bigl((\theta_{k}\theta_{k+1})'y+
\theta_{k}\theta_{k+1}y',(\theta_{k}\theta_{k+1})'y+
\theta_{k}\theta_{k+1}y'\bigr)\\
&=((\theta_{k}\theta_{k+1})')^2(y,y)+
\theta_k\theta_{k+1}(\theta_k\theta_{k+1})'\bigl((y,y')+(y',y)\bigr)\\
&\quad+\theta_k^2\theta_{k+1}^2(y',y').
\end{align*}
It follows directly from formula \eqref{f1} and the definition of $\theta_k$ that
\begin{equation}\label{m_main}
\begin{gathered}
\theta^2_k(x) + \theta^2_{k+1}(x) = 1 \quad \mbox{and}\quad \theta_k(x)\theta_k'(x) + \theta_{k+1}(x)\theta_{k+1}'(x) = 0 \\
\mbox{for every}\quad x\in[k\ell/2+\ell/2,k\ell/2+\ell].
\end{gathered}
\end{equation}
Therefore we have the following equalities on $[k\ell/2+\ell/2,k\ell/2+\ell]$:
\begin{align*}
&(u_k',u_k') + 2(v_k',v_k') + (u_{k+1}',u_{k+1}')\\
=&2(y,y)\bigl(2\theta_k^2(\theta_k')^2+
((\theta_{k}\theta_{k+1})')^2+2\theta_{k+1}^2(\theta_{k+1}')^2\bigr)\\
&+2\bigl((y,y')+(y',y)\bigr)(\theta_k^3\theta_k'+
\theta_k\theta_{k+1}(\theta_k\theta_{k+1})'+\theta_{k+1}^3\theta_{k+1}')\\
&+(y',y')(\theta_k^4+ 2\theta_k^2\theta_{k+1}^2+ \theta_{k+1}^4)\\
=&2(y,y)\bigl(2\theta_k^2(\theta_k')^2+ (\theta_k')^2\theta_{k+1}^2 + 2\theta_k\theta_k'\theta_{k+1}\theta_{k+1}' + \theta_k^2(\theta_{k+1}')^2 + 2\theta_{k+1}^2(\theta_{k+1}')^2\bigr)\\
&+2\bigl((y,y')+(y',y)\bigr)(\theta_k^3\theta_k' + \theta_k\theta_k'\theta_{k+1}^2 + \theta_k^2\theta_{k+1}\theta_{k+1}' + \theta_{k+1}^3\theta_{k+1}')\\
&+(y',y')(\theta_k^2+\theta_{k+1}^2)^2\\
=&2(y,y)\bigl(2\theta_k^2(\theta_k')^2 + 4\theta_k\theta_k'\theta_{k+1}\theta_{k+1}'+
2\theta_{k+1}^2(\theta_{k+1}')^2\\
&\quad+ (\theta_k')^2\theta_{k+1}^2 -2\theta_k\theta_k'\theta_{k+1}\theta_{k+1}' + \theta_k^2(\theta_{k+1}')^2\bigr)\\
&+2\bigl((y,y')+(y',y)\bigr)\bigl(\theta_k^2(\theta_k\theta_k' + \theta_{k+1}\theta_{k+1}') + \theta_{k+1}^2(\theta_k\theta_k'+\theta_{k+1}\theta_{k+1}')\bigr)+(y',y')\\
=&2(y,y)\bigl(2(\theta_k\theta_k'+\theta_{k+1}\theta_{k+1}')^2)+
(\theta_k'\theta_{k+1}-\theta_k\theta_{k+1}')^2\bigr)+(y',y')\\
=&2(y,y)(\theta_k'\theta_{k+1}-\theta_k\theta_{k+1}')^2+(y',y');
\end{align*}
i.e.,
\begin{equation}\label{l1_fb}
(u_k',u_k')+2(v_k',v_k')+(u_{k+1}',u_{k+1}')=
(y',y')+2(\theta_k'\theta_{k+1}-\theta_k\theta_{k+1}')^2(y,y).
\end{equation}
Besides,
\begin{equation*}\label{l1_sec1}
(Qu_k,u_k')=\bigl(Q(\theta_k^2y),(\theta_k^2y)'\bigr)=
\theta_k^2(Qy,2\theta_k\theta_k'y+\theta_k^2y')
=2\theta_k^3\theta_k'(Qy,y)+\theta_k^4 (Qy,y')
\end{equation*}
and
\begin{align*}
(Qv_k,v_k')&=\bigl(Q(\theta_k\theta_{k+1}y),(\theta_k\theta_{k+1}y)'\bigr)=
\theta_k\theta_{k+1}\bigl(Qy,(\theta_k\theta_{k+1})'y+
\theta_k\theta_{k+1}y'\bigr)\\
&=\theta_k\theta_{k+1}(\theta_k\theta_{k+1})'(Qy,y)+
\theta_k^2\theta^2_{k+1}(Qy,y').
\end{align*}

Hence, in view of \eqref{m_main}, we have the following equalities on the compact interval $[k\ell/2+\ell/2,k\ell/2+\ell]$:
\begin{align*}
&(Qu_k,u_k') +2(Qv_k,v_k')+(Qu_{k+1},u_{k+1}')\\
=&2(Qy,y)(\theta_k^3\theta_k' +\theta_k\theta_k'\theta_{k+1}^2 + \theta_k^2\theta_{k+1}\theta_{k+1}'  +\theta_{k+1}^3\theta_{k+1}')\\
&+(Qy,y')(\theta_k^4+2\theta_k^2\theta_{k+1}^2+\theta_{k+1}^4)\\
=&2(Qy,y)\bigl(\theta_k^2(\theta_k\theta_k'+\theta_{k+1}\theta_{k+1}') +\theta_{k+1}^2(\theta_k\theta_k' +\theta_{k+1}\theta_{k+1}')\bigr)
+(Qy,y')(\theta_k^2+\theta_{k+1}^2)^2\\
=&(Qy,y');
\end{align*}
i.e.,
\begin{equation}\label{l1_sb}
(Qu_k,u_k') +2(Qv_k,v_k')+(Qu_{k+1},u_{k+1}')=(Qy,y').
\end{equation}
Since $Q=Q^*$, the equalities
\begin{equation}\label{l2_sb}
\begin{aligned}
(Qu_k',u_k) +2(Qv_k',v_k)+(Qu_{k+1}',u_{k+1})
=&(u_k',Qu_k) +2(v_k',Qv_k)+(u_{k+1}',Qu_{k+1})\\
=&\overline{(Qy,y')}=(Qy',y)
\end{aligned}
\end{equation}
hold on the same interval.

Owing to \eqref{l1_fb}--\eqref{l2_sb} we conclude that the last integrand in \eqref{l1_5} equals \eqref{l1_uk} for every $k\in\mathbb{Z}$. Hence, according to \eqref{l1_5} and \eqref{l1_1}, we have the equalities
\begin{align*}
&\sum\limits_{k=-\infty}^\infty \langle H_0u_k,u_k\rangle+2\sum\limits_{k=-\infty}^\infty \langle H_0v_k,v_k\rangle\\
=&\sum\limits_{k=-\infty}^{\infty}\,
\int\limits_{k\ell/2+\ell/2}^{k\ell/2+\ell}
\bigl((y',y')+2(\theta_k'\theta_{k+1}-\theta_k\theta'_{k+1})^2(y,y)-
(Qy,y')-(Qy',y)\bigr)dx\\
=&\int\limits_{-\infty}^{\infty}\bigl((y'-Qy,y')-(Qy',y)\bigr)dx+
2\sum\limits_{k=-\infty}^{\infty}\,
\int\limits_{k\ell/2+\ell/2}^{k\ell/2+\ell} (\theta_k'\theta_{k+1}-\theta_k\theta'_{k+1})^2\,(y,y)\, dx\\
=&\langle H_0y,y\rangle+2\sum\limits_{k=-\infty}^{\infty}\,
\int\limits_{k\ell/2+\ell/2}^{\ell/2+\ell} (\theta_k'\theta_{k+1}-\theta_k\theta'_{k+1})^2\,(y,y)\,dx.
\end{align*}
This immediately implies the required formula \eqref{l1_main}.
\end{proof}

\textbf{Example.} Let us give an example of a real-valued function
$\theta \in C^{\infty}(\mathbb{R})$ that satisfies the equality $\mathrm{supp}\,\theta=[0,\ell]$ and condition \eqref{f1}. Recall that this function is used in Lemma~\ref{lem1}. We choose a function  $\eta_0\in C^{\infty}(\mathbb{R})$ such that $\mathrm{supp}\,\eta_0=[0,\ell]$ and $\eta_0(x)>0$ for every $x\in (0,\ell)$. Let $\eta$ denote the $\ell$-periodic extension of the function $\eta_0^2$ over the whole~$\mathbb{R}$. We introduce the real-valued function
\begin{equation*}
\theta(x):=\frac{\eta_0(x)}{\sqrt{\eta(x)+\eta(x-h/2)}} \quad \mbox{of}\quad x\in\mathbb{R}.
\end{equation*}
Since $\eta(x)+\eta(x-h/2)>0$ for every $x\in\mathbb{R}$, this function is well defined and satisfies the conditions $\theta\in C^{\infty}(\mathbb{R})$ and $\mathrm{supp}\,\theta=[0,\ell]$. It also satisfies condition \eqref{f1}. Indeed, given $x\in[\ell/2,\ell]$, we obtain the equalities
\begin{equation*}
\begin{gathered}
\theta^2(x)+\theta^2(x-\ell/2)=
\frac{\eta^2_0(x)}{\eta(x)+\eta(x-\ell/2)}+
\frac{\eta^2_0(x-\ell/2)}{\eta(x-\ell/2)+\eta(x-\ell)}=\\
=\frac{\eta^2_0(x)+\eta^2_0(x-\ell/2)}{\eta(x)+\eta(x-\ell/2)}=1
\end{gathered}
\end{equation*}
in view of the definition of $\eta$.

\section{Basic Lemma}\label{sect4}

We put
\begin{equation*}
\varkappa:=\max\bigl\{|\theta'(x)|:0\leq x\leq\ell\bigr\}.
\end{equation*}

\begin{lemma}\label{lem2}
Let $\Omega$ be an nonempty open subset of $\mathbb{R}$. Then there exists $n\in\mathbb{Z}$ such that $\omega_n^\ell\cap\Omega\neq\varnothing$ and
\begin{equation}\label{l2_main}
\nu(\omega_n^\ell)\leq\lambda(\Omega)+8\varkappa^2.
\end{equation}
\end{lemma}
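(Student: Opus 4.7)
The plan is to apply Lemma~\ref{lem1} to a quasi-minimizer of $\lambda(\Omega)$ and then extract $n$ as the index minimizing $\lambda(\omega_k^\ell)$ over those $k$ on which the decomposition is actually supported.

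Given $\varepsilon>0$, choose $y\in D(H_0')$ with $\mathrm{supp}\,y\subset\Omega$, $\|y\|=1$, and $\langle H_0y,y\rangle<\lambda(\Omega)+\varepsilon$. Since $\mathrm{supp}\,y$ is compact, only finitely many of the functions $u_k,v_k$ are nonzero; let $F$ denote that finite set of $k$'s. For every $k\in F$ one has $\omega_k^\ell\cap\Omega\neq\varnothing$; moreover $u_k,v_k\in D(H_0')$ by Lemma~\ref{lem1}, with supports inside $\overline{\omega_k^\ell}$ (a mild approximation argument handles the boundary of $\omega_k^\ell$, where $\theta_k$ vanishes smoothly). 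Hence by the definition of $\lambda(\omega_k^\ell)$,
\begin{equation*}
\langle H_0u_k,u_k\rangle\ge\lambda(\omega_k^\ell)\,\|u_k\|^2,\qquad
\langle H_0v_k,v_k\rangle\ge\lambda(\omega_k^\ell)\,\|v_k\|^2.
\end{equation*}

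The key algebraic step is that identity~\eqref{m_main} from the proof of Lemma~\ref{lem1} gives $\sum_k\theta_k^4+2\sum_k\theta_k^2\theta_{k+1}^2=(\theta_j^2+\theta_{j+1}^2)^2=1$ on every half-interval $[j\ell/2+\ell/2,j\ell/2+\ell]$, so
\begin{equation*}
\sum_{k\in F}\|u_k\|^2+2\sum_{k\in F}\|v_k\|^2=\|y\|^2=1.
\end{equation*}
Using $|\theta|\le 1$ on each overlap interval together with $|\theta'|\le\varkappa$ yields $(\theta_k'\theta_{k+1}-\theta_k\theta_{k+1}')^2\le 4\varkappa^2$, and the half-intervals $[k\ell/2+\ell/2,k\ell/2+\ell]$ tile $\mathbb{R}$, so the last sum in \eqref{l1_main} is bounded by $8\varkappa^2\|y\|^2=8\varkappa^2$. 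Choosing $n\in F$ to minimize $k\mapsto\lambda(\omega_k^\ell)$ over $F$, Lemma~\ref{lem1} then yields $\langle H_0y,y\rangle\ge\lambda(\omega_n^\ell)-8\varkappa^2$, hence $\lambda(\omega_n^\ell)\le\lambda(\Omega)+\varepsilon+8\varkappa^2$.

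The remaining task, and in my view the main obstacle, is to pass from this $\varepsilon$-approximate bound, whose index $n=n(\varepsilon)$ a priori depends on $\varepsilon$, to a single $n$ realizing $\lambda(\omega_n^\ell)\le\lambda(\Omega)+8\varkappa^2$. Letting $\varepsilon_j\downarrow 0$: if the sequence $n(\varepsilon_j)$ is bounded in $\mathbb{Z}$ then some value recurs infinitely often and the required inequality follows on passage to the limit; the unbounded case has to be ruled out or treated separately (for instance by exploiting that $\mathrm{supp}\,y_j$ can always be chosen inside a fixed bounded subregion of $\Omega$ realizing the infimum to within $\varepsilon_j$). This passage, together with the minor boundary-support technicality for $u_k,v_k$, is the only nonroutine point; the core estimate is an immediate consequence of Lemma~\ref{lem1} and the partition identity \eqref{m_main}.
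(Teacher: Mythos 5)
Your core argument is correct and coincides with the paper's own proof: the paper likewise feeds a $\delta$-quasi-minimizer of $\lambda(\Omega)$ into Lemma~\ref{lem1}, bounds the last sum in \eqref{l1_main} by $8\varkappa^{2}\langle y,y\rangle$ using $(\theta_k'\theta_{k+1}-\theta_k\theta_{k+1}')^2\le 4\varkappa^{2}$, proves the norm identity $\langle y,y\rangle=\sum_k\langle u_k,u_k\rangle+2\sum_k\langle v_k,v_k\rangle$ exactly as you do, and then extracts an index by observing that at least one summand of the grouped sum must be negative --- which is the same device as your choice of the minimizing $n\in F$. Two remarks. First, the ``main obstacle'' you single out is genuine, but it is not resolved in the paper either: its index $k_0$ depends on $\delta$ just as your $n(\varepsilon)$ depends on $\varepsilon$, and the proof nevertheless ends with ``passing to the limit as $\delta\to 0+$''. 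Your suggested repair (confining the quasi-minimizers to a fixed bounded part of $\Omega$) does not work in general, since for unbounded $\Omega$ the infimum defining $\lambda(\Omega)$ may be approached only along functions whose supports escape to infinity. What the argument actually establishes is $\inf\{\lambda(\omega_n^\ell):\omega_n^\ell\cap\Omega\neq\varnothing\}\le\lambda(\Omega)+8\varkappa^{2}$, equivalently the conclusion with $\lambda(\Omega)+8\varkappa^{2}+\delta$ for every $\delta>0$ and some $n=n(\delta)$; inspection of the proofs of Theorems~\ref{th_1st_loc_princ} and~\ref{th_2nd_loc_princ} shows that this weaker form is all that is ever used there (the inequality is always played against a uniform lower bound on the numbers $\lambda(\omega_n^\ell)$), so the honest course is to state the lemma in that form rather than to chase a single attaining $n$. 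Second, the boundary technicality you flag (the supports of $u_k,v_k$ touching the endpoints of $\omega_k^\ell$) is passed over silently in the paper as well, which simply asserts $\mathrm{supp}\,w_{k_0}\subset\omega_{k_0}^\ell$; your remark that a small approximation argument is needed there is fair. Note also that $\nu$ in \eqref{l2_main} is a misprint for $\lambda$.
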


\begin{proof}
It follows from property \eqref{f1} and the definition of $\varkappa$ that $0\leq\theta_k(x)\leq 1$ and $|\theta_k'(x)|\leq \varkappa$ for arbitrary $k\in\mathbb{Z}$ and $x\in\mathbb{R}$. Therefore
$(\theta_k'\theta_{k+1}-\theta_k\theta'_{k+1})^2(x)\leq 4\varkappa^2$;
hence, owing to Lemma~\ref{lem1}, the inequality
\begin{equation}\label{l2-g}
\langle H_0y,y\rangle\geq\sum\limits_{k=-\infty}^\infty \langle H_0u_k,u_k\rangle+2\sum\limits_{k=-\infty}^\infty \langle H_0v_k,v_k\rangle-8\varkappa^2\langle y,y\rangle
\end{equation}
holds true for every $y\in D(H_0')$.

Note that the equality
\begin{equation}\label{l2-h}
\langle y,y\rangle=\sum\limits_{k=-\infty}^\infty\langle u_k,u_k\rangle+2\sum\limits_{k=-\infty}^\infty\langle v_k,v_k\rangle
\end{equation}
is valid for every $y\in D(H_0')$. Indeed,
\begin{align*}
\sum\limits_{k=-\infty}^{\infty}\langle u_k,u_k\rangle&=
\sum\limits_{k=-\infty}^{\infty}\langle \theta_k^2y,\theta_k^2y\rangle=
\sum\limits_{k=-\infty}^{\infty}\int\limits_{k\ell/2}^{k\ell/2+\ell}
(\theta_k^2y,\theta_k^2y)dx\\
&=\sum\limits_{k=-\infty}^{\infty}\int\limits_{k\ell/2}^{k\ell/2+\ell/2}
\theta_k^4(y,y)dx+\sum\limits_{k=-\infty}^{\infty}\,
\int\limits_{k\ell/2+\ell/2}^{k\ell/2+\ell}\theta_k^4(y,y)dx\\
&=\sum\limits_{j=-\infty}^{\infty}\,
\int\limits_{j\ell/2+\ell/2}^{j\ell/2+\ell}\theta_{j+1}^4(y,y)dx+
\sum\limits_{k=-\infty}^{\infty}\,
\int\limits_{k\ell/2+\ell/2}^{\ell/2+\ell}\theta_k^4(y,y)dx\\
&=\sum\limits_{k=-\infty}^{\infty}\,
\int\limits_{k\ell/2+\ell/2}^{k\ell/2+\ell}
(\theta_{k+1}^4+\theta_{k}^4)(y,y)dx.
\end{align*}
Besides,
\begin{align*}
\sum\limits_{k=-\infty}^{\infty}\langle v_k,v_k\rangle&=
\sum\limits_{k=-\infty}^{\infty}\langle \theta_k\theta_{k+1}y,\theta_k\theta_{k+1}y\rangle\\
&=\sum\limits_{k=-\infty}^{\infty}\,
\int\limits_{k\ell/2+\ell/2}^{\ell/2+\ell}
(\theta_k\theta_{k+1}y,\theta_k\theta_{k+1}y)dx\\
&=\sum\limits_{k=-\infty}^{\infty}\,
\int\limits_{k\ell/2+\ell/2}^{k\ell/2+\ell}
\theta_k^2\theta^2_{k+1}( y,y)dx.
\end{align*}
Therefore
\begin{align*}
\sum\limits_{k=-\infty}^{\infty}\langle u_k,u_k\rangle+2\sum\limits_{k=-\infty}^{\infty}\langle v_k,v_k\rangle
&=\sum\limits_{k=-\infty}^{\infty}\,
\int\limits_{k\ell/2+\ell/2}^{k\ell/2+\ell}
(\theta_{k}^2+\theta_{k+1}^2)^2\,(y,y)dx\\
&=\sum\limits_{k=-\infty}^{\infty}\,
\int\limits_{k\ell/2+\ell/2}^{k\ell/2+\ell}(y,y)dx=\langle y,y\rangle.
\end{align*}

It follows from the definition of $\lambda(\Omega)$ that for every number $\delta>0$ there exists a function $y\in D(H_0')$ such that
$\mathrm{supp}\,y\subset\Omega$ and
\begin{equation*}\label{l2-j}
\langle H_0y,y\rangle<(\lambda(\Omega)+\delta)\langle y,y\rangle.
\end{equation*}
Applying \eqref{l2-g} and \eqref{l2-h} to the last formula, we obtain the inequality
\begin{align*}
\sum\limits_{k=-\infty}^\infty \langle H_0u_k,u_k\rangle&+2\sum\limits_{k=-\infty}^\infty \langle H_0v_k,v_k\rangle\\
&-(8\varkappa^{2}+\lambda(\Omega)+\delta)
\biggl(\,\sum\limits_{k=-\infty}^\infty\langle u_k,u_k\rangle+2\sum\limits_{k=-\infty}^\infty\langle v_k,v_k\rangle \biggr)<0.
\end{align*}
Grouping summands, we write this inequality in the form
\begin{equation*}\label{l2-f}
\begin{aligned}
&\sum\limits_{k=-\infty}^\infty\bigl(\langle H_0u_k,u_k\rangle-(\lambda(\Omega)+8\varkappa^{2}+\delta)\langle u_k,u_k\rangle\bigr)\\
+2&\sum\limits_{k=-\infty}^\infty\bigl(\langle H_0v_k,v_k\rangle-(\lambda(\Omega)+8\varkappa^{2}+\delta)\langle v_k,v_k\rangle\bigr)<0.
\end{aligned}
\end{equation*}
Here, at least one of the summands is less than zero. Let a negative summand have an index $k=k_0$. Then $u_{k_0}\not\equiv 0$ or $v_{k_0}\not\equiv 0$ for otherwise this summand would equal to zero. Hence,
\begin{equation*}
\emptyset\neq (\mathrm{supp}\,u_{k_0})\cup (\mathrm{supp}\,v_{k_0})\subset \Omega \cap \omega_{k_0}^\ell;
\end{equation*}
i.e., $\omega_{k_0}^\ell\cap\Omega\neq\emptyset$. Besides,
\begin{equation*}
\frac{\langle H_0w_{k_0},w_{k_0}\rangle}{\langle w_{k_0},w_{k_0}\rangle}<\lambda(\Omega)+8\varkappa^{2}+\delta,
\end{equation*}
with $w_{k_0}:=u_{k_0}$ or $w_{k_0}:=v_{k_0}$. It follows from this inequality and the inclusion $\mathrm{supp}\,w_{k_0}\subset\omega_{k_0}^\ell$ that
\begin{equation*}
\lambda(\omega_{k_0}^\ell)<\lambda(\Omega)+8\varkappa^{2}+\delta.
\end{equation*}
Passing here to the limit as $\delta\to 0+$, we obtain the required inequality \eqref{l2_main}.
\end{proof}

\section{Proofs of the main results}\label{sect5}

We will prove Theorems \ref{th_1st_loc_princ} and \ref{th_2nd_loc_princ} with the help of Lemma~\ref{lem2}.

\begin{proof}[Proof of Theorem $\ref{th_1st_loc_princ}$]
\emph{Sufficiency.} Assume that there exists a number  $\alpha\in\mathbb{R}$ such that $\lambda(\omega_n^\ell)\geq\alpha$ for every $n\in\mathbb{Z}$. Then, according to Lemma~\ref{lem2} for $\Omega:=\mathbb{R}$, we have the inequalities
\begin{equation*}
\lambda(\mathbb{R})\geq\lambda(\omega_n^\ell)-8\varkappa^{2}\geq \alpha-8\varkappa^{2}.
\end{equation*}
Hence, $H_0'\geq(\alpha-8\varkappa^{2})I$, where $I$ is the identity operator. Then the operator $H_0$ is also bounded below so that it is selfadjoint due to \cite[Corollary~2]{MikhailetsMolyboga13MFAT2}. Sufficiency is proved.

\emph{Necessity} is obvious. Indeed, if $H_0\geq\beta I$ for certain $\beta\in\mathbb{R}$, then $\nu(\omega_n^\ell)\geq\beta$ for every  $n\in\mathbb{Z}$.
\end{proof}

\begin{proof}[Proof of Theorem $\ref{th_2nd_loc_princ}$]
\emph{Sufficiency.} Assume that $\lambda(\omega_n^\ell)\to+\infty$ as $|n|\rightarrow\infty$. Then, owing to Theorem~\ref{th_1st_loc_princ}, the operator $H_0$ is bounded below and selfadjoint; hence, $H_0=H$. Let us prove that its spectrum is discrete, i.e. $\sigma_{\mathrm{ess}}(H_0)=\emptyset$. We arbitrarily choose a number $r>0$. By our assumption, there exists a number $n_r\in\mathbb{N}$ such that
\begin{equation*}
\lambda(\omega_n^\ell)>r\quad\mbox{whenever}\quad|n|\geq n_r.
\end{equation*}
Let us use Lemma \ref{lem2} for the open set
\begin{equation*}
\Omega:=\Bigl(-\infty,-\frac{n_r\ell}{2}-\ell\Bigr)
\cup\Bigl(\frac{n_r\ell}{2}+\ell,\infty\Bigr).
\end{equation*}
Observe that $\omega_n^\ell\cap\Omega\neq\emptyset\Rightarrow|n|\geq n_r$. Therefore it follows from this lemma that
\begin{equation}\label{f18}
\langle H_0y,y\rangle\geq(r-8\varkappa^{2})\langle y,y\rangle
\quad\mbox{whenever}\quad y\in D(H_0')\;\;
\mbox{and}\;\;\mathrm{supp}\,y\subset\Omega.
\end{equation}

We put $\gamma:=n_r\ell/2+2\ell$ and consider the decomposition of the Hilbert space $L^2(\mathbb{R},\mathbb{C}^m)$ in the orthogonal sum of its subspaces
\begin{equation*}
L^2(\mathbb{R})=L^2(-\infty, -\gamma]\oplus L^2[-\gamma,\gamma]\oplus L^2[\gamma, \infty).
\end{equation*}
For the sake of brevity of formulas in the proof, we omit the expression $\mathbb{C}^m$ and exterior parentheses in designations of spaces of vector-valued functions. For example, $L^2(-\infty, -\gamma]$ stands for the space $L^2((-\infty, -\gamma],\mathbb{C}^m)$. Besides, we identify vector-valued functions given on an interval $G\subset\mathbb{R}$ with their extensions by zero over the whole~$\mathbb{R}$. In this sense, $L^2(G)$ is considered as a subspace of $L^2(\mathbb{R})$. With the operator $H_0$ and this decomposition we associate three unbounded operators $H_jy:=-y^{[2]}$, where $j\in\{1,2,3\}$. They are defined respectively on the linear manifolds
\begin{gather*}
D(H_1):=\bigl\{y\in L^2(-\infty,-\gamma]:y,y^{[1]}\in \mathrm{AC_{loc}}(-\infty,-\gamma],\\ y(-\gamma)=y^{[1]}(-\gamma)=0,\,y^{[2]}\in L^2(-\infty,-\gamma] \bigr\},\\
D(H_2):=\bigl\{y\in L^2[-\gamma,\gamma]:y,y^{[1]}\in \mathrm{AC}[-\gamma,\gamma],\\ y(-\gamma)=y^{[1]}(-\gamma)=y(\gamma)=y^{[1]}(\gamma)=0,\,y^{[2]}\in L^2[-\gamma,\gamma] \bigr\},\\
D(H_3):=\bigl\{y\in L^2[\gamma,\infty):y,y^{[1]}\in \mathrm{AC_{loc}}[\gamma,\infty),\\y(\gamma)=y^{[1]}(\gamma)=0,\,
y^{[2]}\in L^2[\gamma,\infty) \bigr\}.
\end{gather*}
Each operator $H_j$ is closed and a restriction of $H_0$. This follows from the fact that $(O_jy)^{[1]}=O_j(y^{[1]})$ and $(O_jy)^{[2]}=O_j(y^{[2]})$ for every $y\in \nobreak D(H_j)$, where $O_j$ is the operator of the extension of a function by zero from the corresponding set onto the whole $\mathbb{R}$. Hence, $H_0$ is an extension of the orthogonal sum $H_1\oplus H_2\oplus H_3$ of these operators. Since $H_0$ is bounded below, all $H_1$, $H_2$, $H_3$ are also bounded below.

We let $H_j^\mathrm{F}$ denote the selfadjoint Friedrichs extension of the semibounded operator $H_j$, with $j\in\{1,2,3\}$. The spectrum of $H_2^\mathrm{F}$ is discrete \cite{SavchukShkalikov99}. Owing to property \eqref{f18} and the definition of $\gamma$, the operators  $H_1^\mathrm{F}$ and $H_3^\mathrm{F}$ are bounded below by the number $r-8\varkappa^{2}$.  The resolvents of the selfadjoint operators $H_0$ and $H_1^\mathrm{F}\oplus H_2^\mathrm{F}\oplus H_3^\mathrm{F}$ in $L^2(\mathbb{R})$ differ in an operator with finite rank. Hence,
\begin{equation*}
\sigma_{\mathrm{ess}}(H_0) = \sigma_{\mathrm{ess}}(H_1^\mathrm{F})\cup
\sigma_{\mathrm{ess}}(H_3^\mathrm{F}),
\end{equation*}
which yields the equality
\begin{equation*}
\sigma_{\mathrm{ess}}(H_0)\cap(-\infty,r-8\varkappa^{2})=\emptyset.
\end{equation*}
Thus, $\sigma_{\mathrm{ess}}(H_0)=\emptyset$ because the number $r>0$ is arbitrarily chosen. Sufficiency is proved.

\emph{Necessity.} Assume that $H_0$ is a bounded below selfadjoint operator with discrete spectrum. Let us deduce property \eqref{T2} by means of proof by contradiction. Suppose the contrary, i.e. there exists a number $r>0$ and sequence $(n_k)_{k=1}^{\infty}\subset\mathbb{Z}$ such that $|n_k|\to\infty$ as $k\to\infty$ and that $\lambda(\omega_{n_k}^\ell)<r$. Passing to a subsequence, we may suppose that $\omega_{n_k}^\ell\cap\omega_{n_p}^\ell=\emptyset$ whenever $k\neq p$. It follows from the definition of $\lambda(\omega_{n_k}^\ell)$ that for every integer $k\geq1$ there exists a vector-valued function  $y_k\in D(H_0')\setminus\{0\}$ such that
$\mathrm{supp}\,y_k\subset \omega_{n_k}^\ell$ and
$\langle H_0'y_k - ry_k,y_k\rangle<0$. Let $G$ be a linear span of $\{y_k:1\leq k\in\mathbb{Z}\}$. Since  $\mathrm{supp}\,y_k\cap\mathrm{supp}\,y_p=\emptyset$ whenever $k\neq p$, we deduce the properties $\dim G=\infty$ and
\begin{equation*}
\langle H_0y-ry,y\rangle<0\quad\mbox{for every}\quad
y\in G\setminus\{0\}.
\end{equation*}
Therefore, applying \cite[Chapter~1, Theorem~13]{Glasman} to the  selfadjoint operator $H_0$, we conclude that the set $\sigma(H_0)\cap (-\infty,r)$ is infinite. This contradicts our assumption, according to which the spectrum of $H_0$ is bounded below and does not contain any limit points. Necessity is proved.
\end{proof}

\section*{Appendix}

Let $n\in\mathbb{Z}$. In the complex separable Hilbert space $L^2(\omega_n^\ell,\mathbb{C}^m)$ we consider the operator $H^{\mathrm{D}}(\omega_n^\ell)$ which is the restriction of the maximal operator $H$ on the set of all vector-valued functions $y\in D(H)$ that satisfy the boundary conditions $y(n\ell/2)=0$ and $y(n\ell/2+\ell)=0$.
The operator $H^{\mathrm{D}}(\omega_n^\ell)$ is selfadjoint and bounded below \cite{Konstantinov15}.

\begin{theoremapp}\label{thA}
The operator $H^{\mathrm{D}}(\omega_n^\ell)$ is the Friedrichs extension of the minimal operator $H_{0}(\omega_n^\ell)$ generated by the differential expression \eqref{A} on the interval $\omega_n^\ell$.
\end{theoremapp}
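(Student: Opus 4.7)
The plan is to invoke the standard form-theoretic characterization of the Friedrichs extension: a self-adjoint extension $A$ of a lower-semibounded closed symmetric operator $S$ coincides with $S^{\mathrm{F}}$ if and only if $D(A)$ is contained in the form domain of the closure of the quadratic form $t_{S}[y]:=\langle Sy,y\rangle$ initially defined on $D(S)$. Accordingly I would split the argument into two parts: first, that $H^{\mathrm{D}}(\omega_n^\ell)$ is a self-adjoint extension of $H_{0}(\omega_n^\ell)$; second, that $D(H^{\mathrm{D}}(\omega_n^\ell))$ lies in the form domain of the closure of the quadratic form $t$ associated with $H_{0}(\omega_n^\ell)$.

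The first part should be routine: vector-valued functions in the preminimal operator on $\omega_n^\ell$ are compactly supported in the open interval, hence satisfy the Dirichlet conditions at its endpoints automatically and therefore belong to $D(H^{\mathrm{D}}(\omega_n^\ell))$; since $H^{\mathrm{D}}(\omega_n^\ell)$ is closed, it contains $H_{0}(\omega_n^\ell)$. To get an explicit expression for the form, I would repeat the integration by parts of \eqref{l1_1} on $\omega_n^\ell=(a,b)$ and use the pointwise identity $(Qu,v)=(u,Qv)$ (from $Q=Q^{*}$), arriving at
\begin{equation*}
t[y]=\langle H_{0}y,y\rangle=\int_{a}^{b}\bigl(|y^{[1]}|^{2}-|Qy|^{2}\bigr)\,dx
\end{equation*}
for every $y$ in the preminimal operator.

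The second part is the heart of the matter: I would approximate each $y\in D(H^{\mathrm{D}}(\omega_n^\ell))$ in the form norm by elements of the preminimal operator. From $y,y^{[1]}\in\mathrm{AC}[a,b]$ and the factorization $y'=y^{[1]}+Qy$ (with $Qy\in L^{2}[a,b]$ since $y$ is bounded and $Q\in L^{2}_{\mathrm{loc}}$), I deduce $y'\in L^{2}[a,b]$, i.e., $y\in H^{1}[a,b]$. I would then pick smooth cutoffs $\chi_\epsilon:[a,b]\to[0,1]$, equal to $1$ on $[a+\epsilon,b-\epsilon]$, vanishing in neighborhoods of $\{a,b\}$, with $|\chi_\epsilon'|\le C/\epsilon$, and set $y_\epsilon:=\chi_\epsilon y$; this $y_\epsilon$ belongs to the preminimal operator. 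The $L^{2}$ convergence $y_\epsilon\to y$ is immediate by dominated convergence, and the identity $(y-y_\epsilon)^{[1]}=-\chi_\epsilon'y+(1-\chi_\epsilon)y^{[1]}$ reduces $t[y-y_\epsilon]\to 0$ to estimating three integrals: $\int(1-\chi_\epsilon)^{2}|y^{[1]}|^{2}\,dx$ and $\int(1-\chi_\epsilon)^{2}|Qy|^{2}\,dx$ vanish by dominated convergence, while $\int|\chi_\epsilon'y|^{2}\,dx$ is the delicate one.

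This last integral is the main obstacle: the naive bound $|\chi_\epsilon'|^{2}|y|^{2}\le(C/\epsilon)^{2}|y|^{2}$ fails to produce any smallness. The remedy is to combine the Dirichlet condition $y(a)=0$ with $y\in H^{1}[a,b]$, which yields the pointwise estimate $|y(x)|^{2}\le(x-a)\int_{a}^{x}|y'|^{2}\,dt$ on $[a,a+\epsilon]$; the resulting factor $\epsilon^{2}$ cancels the $\epsilon^{-2}$ coming from $|\chi_\epsilon'|^{2}$, leaving
\begin{equation*}
\int_{a}^{a+\epsilon}|\chi_\epsilon'y|^{2}\,dx\le \frac{C^{2}}{2}\int_{a}^{a+\epsilon}|y'|^{2}\,dt,
\end{equation*}
which tends to $0$ by absolute continuity of the Lebesgue integral; the analogous estimate near $b$ is identical. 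Once this form-norm density of the preminimal operator in $D(H^{\mathrm{D}}(\omega_n^\ell))$ is established, the inclusion $D(H^{\mathrm{D}}(\omega_n^\ell))\subset Q(\overline{t})$ follows, and the uniqueness of the Friedrichs extension concludes the proof.
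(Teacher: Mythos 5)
Your argument is correct, but it follows a genuinely different route from the paper. The paper reduces Theorem~A to known results: it starts from the scalar free case $q=0$, $m=1$, passes to $m\ge 2$ by noting that the Friedrichs extension of an orthogonal sum is the orthogonal sum of the Friedrichs extensions, and then absorbs the singular potential via the form-sum representation of $H^{\mathrm{D}}(\omega_n^\ell)$ from the work of Molyboga, using that the potential's form is zero relatively form-bounded, so that the KLMN theorem identifies the form domain and hence the Friedrichs extension. You instead prove the statement directly from the abstract characterization of the Friedrichs extension as the unique self-adjoint extension whose domain lies in the closed form domain: you compute the form $t[y]=\int(|y^{[1]}|^{2}-|Qy|^{2})\,dx$ explicitly (this identity is correct, and is exactly what \eqref{l1_1} gives after substituting $y'=y^{[1]}+Qy$ and using $Q=Q^{*}$), and then establish form-norm density of the compactly supported functions in $D(H^{\mathrm{D}}(\omega_n^\ell))$ by a cutoff argument, where the Dirichlet condition enters precisely through the endpoint Hardy estimate $|y(x)|^{2}\le(x-a)\int_a^x|y'|^{2}\,dt$ that kills the $\epsilon^{-2}$ from the cutoff derivative. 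The trade-off: the paper's reduction is short and inherits extra structural information (identification of the form domain with that of the free Dirichlet operator) but leans on external results; your proof is self-contained, makes transparent why the Dirichlet conditions single out the Friedrichs extension, and does not require verifying relative form-boundedness of the singular part. Two small points you should make explicit in a final write-up: (i) semiboundedness of $H_{0}(\omega_n^\ell)$ (needed for the Friedrichs extension to exist) follows once you know $H^{\mathrm{D}}(\omega_n^\ell)$ is a bounded-below extension of it, which the paper cites from Konstantinov's work; and (ii) the quantity $t[y-y_\epsilon]$ is not itself the form norm — what you actually need, and what your three estimates do deliver, is that $(y_\epsilon)$ is Cauchy in the form norm (equivalently, $(y-y_\epsilon)^{[1]}\to0$, $Q(y-y_\epsilon)\to0$, and $y-y_\epsilon\to0$ in $L^{2}$), together with $y_\epsilon\to y$ in $L^{2}$, so that $y$ lies in the closed form domain.
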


Since the spectrum of $H^{\mathrm{D}}(\omega_n^\ell)$ is discrete \cite{SavchukShkalikov99}, it follows from Theorem~\ref{thA} and properties of the Friedrichs extension that the first eigenvalue of $H^{\mathrm{D}}(\omega_n^\ell)$ coincides with the number $\lambda(\omega_n^\ell)$ introduced in Section~\ref{sect2}.

Let us outline the proof of Theorem~\ref{thA}. This theorem is known in the case where $q=0$ and $m=1$. If $q=0$ and $m\geq 2$, the minimal operator $H_{0}(\omega_n^\ell)$ is the orthogonal sum of $m$ scalar minimal operators. It follows then from the construction of the Friedrichs extension (see, e.g., \cite[Section~124]{RieszSz-Nagy}) that the operator $H^{\mathrm{D}}(\omega_n^\ell)$ is the orthogonal sum of $m$ Friedrichs extensions of the scalar minimal operators. If $q\neq0$ and $m\geq1$, then, using the reasoning from \cite{Molyboga15}, we may show that the operator $H^{\mathrm{D}}(\omega_n^\ell)$ is the form-sum of the free Hamiltonian ($q\equiv0$) and the quadratic form corresponding to the singular potential. The latter is zero relative form-bounded. This implies Theorem~\ref{thA} in view of KLMN theorem.

\end{document}